\title[Preferential attachment trees are  split trees]
{Random recursive trees and preferential attachment trees are random split
  trees} 
\date{16 June, 2017}
\author{Svante Janson}
\thanks{Partly supported by the Knut and Alice Wallenberg Foundation}
\address{Department of Mathematics, Uppsala University, PO Box 480,
SE-751~06 Uppsala, Sweden}
\email{svante.janson@math.uu.se}
\urladdr{http://www.math.uu.se/svante-janson}
\subjclass[2010]{} 
\numberwithin{equation}{section}
\renewcommand\le{\leqslant}
\renewcommand\ge{\geqslant}
\theoremstyle{plain}% default
\newtheorem{theorem}{Theorem}[section]
\newtheorem{lemma}[theorem]{Lemma}
\newtheorem{corollary}[theorem]{Corollary}
\theoremstyle{definition}
\newtheorem{example}[theorem]{Example}
\newtheorem{remark}[theorem]{Remark}
\theoremstyle{remark}
\newenvironment{romenumerate}[1][-10pt]{% optional argument changes indentation
\addtolength{\leftmargini}{#1}\begin{enumerate}% gives (i), (ii) etc.
 }{\end{enumerate}}
\newenvironment{romxenumerate}[1][-10pt]{% optional argument changes indentation
\addtolength{\leftmargini}{#1}\begin{enumerate}% gives (i), (ii) etc.
 }{\end{enumerate}}
\newcommand{\refT}[1]{Theorem~\ref{#1}}
\newcommand{\refL}[1]{Lemma~\ref{#1}}
\newcommand{\refR}[1]{Remark~\ref{#1}}
\newcommand{\refS}[1]{Section~\ref{#1}}
\newcommand{\refE}[1]{Example~\ref{#1}}
\newcommand{\refApp}[1]{Appendix~\ref{#1}}
\newcommand\REM[1]{{\raggedright\texttt{[#1]}\par\marginal{XXX}}}
\newcommand\XREM[1]{\relax}
\xdef\klockan{\the\count1.0\the\count255}
\xdef\klockan{\the\count1.\the\count255}\fi
\newcommand{\sumi}{\sum_{i=1}^\infty}
\newcommand{\summ}{\sum_{m=1}^\infty}
\newcommand{\sumim}{\sum_{i=1}^m}
\newcommand{\sumkn}{\sum_{k=1}^n}
\newcommand\set[1]{\ensuremath{\{#1\}}}
\newcommand\bigpar[1]{\bigl(#1\bigr)}
\newcommand\Bigpar[1]{\Bigl(#1\Bigr)}
\newcommand\xcpar[1]{\{#1\}}
\def\rompar(#1){\textup(#1\textup)}    % usage: \rompar(...)
\def\xexp(#1){e^{#1}}
\newcommand\ceil[1]{\lceil#1\rceil}
\newcommand\ntoo{\ensuremath{{n\to\infty}}}
\newcommand\punkt{.\spacefactor=1000}    % om problem!
\newcommand\iid{i.i.d\punkt}    
\newcommand\ie{i.e\punkt}
\newcommand\eg{e.g\punkt}
\newcommand\cf{cf\punkt}
\newcommand{\as}{a.s\punkt}
\newcommand{\tend}{\longrightarrow}
\newcommand\asto{\overset{\mathrm{a.s.}}{\tend}}
\newcommand\eqd{\overset{\mathrm{d}}{=}}
\newcommand\bbN{\mathbb N}
\newcounter{CC}
\newcounter{cc}
\newcommand\E{\operatorname{\mathbb E{}}}
\renewcommand\P{\operatorname{\mathbb P{}}}
\newcommand\ga{\alpha}
\newcommand\gb{\beta}
\newcommand\gd{\delta}
\newcommand\gam{\gamma}
\newcommand\gG{\Gamma}
\newcommand\gth{\theta}
\renewcommand\phi{\xxx}  %% WARNING
\newcommand\cP{\mathcal P}
\newcommand\cT{{\mathcal T}}
\newcommand\cV{\mathcal V}
\newcommand\ett[1]{\boldsymbol1\xcpar{#1}}
\newcommand\qww{^{-2}}
\newcommand\oi{\ensuremath{[0,1]}}
\newcommand\dd{\,\mathrm{d}}
\newcommand\vv{^{(v)}}
\newcommand\Tb{\cT_b}
\newcommand\GEM{\operatorname{GEM}}
\newcommand\PD{\operatorname{PD}}
\newcommand\Dir{\operatorname{Dir}}
\newcommand\XP{^{\chi,\rho}}
\newcommand\XM[1]{^{-1,#1}}
\newcommand\VP{^{\cP}}
\newcommand\pa{preferential attachment}
\newcommand\lpa{linear \pa}
\newcommand\bP{\mathbf{P}}
\newcommand\aP{P}
\newcommand\caP{\cP}
\newcommand\hP{\hat P}
\newcommand\bX{\mathbf X}
\newcommand\ao{_1^\infty}
\newcommand\mary{$m$-ary}
\newcommand\maryt{$m$-ary  tree}
\newcommand\maryit{$m$-ary increasing tree}
\newcommand\hY{\widehat Y}
\newcommand\xH{H^*}
\newcommand\hW{\hat W}
\newcommand{\Polya}{P\'olya}
\begin{document}

\begin{abstract} 
We consider linear  preferential attachment trees, and show that they
can be regarded as random split trees in the sense of Devroye (1999),
although with infinite potential  branching.
In particular, this applies to
the random recursive tree and the standard preferential attachment tree.
An application is given to the sum over all pairs of nodes of the common
number of ancestors.  
\end{abstract}

\maketitle

\section{Introduction}\label{S:intro}

The purpose of this paper is to show that the linear  preferential
attachment trees, a class of random trees that includes and generalises
both the random
recursive tree and the standard preferential attachment tree,
can be regarded as random split trees in the sense of \citet{Devroye},
although with infinite (potential)  branching.

Recall that the random recursive tree
is an unordered rooted tree that is
constructed by adding nodes one by one, with each node attached as the child
of an existing node chosen uniformly at random; see \eg{} \cite[Section
1.3.1]{Drmota}. 
The general preferential attachment tree is constructed in a
similar way, but for each new node, its parent is chosen among the existing
nodes with the probability of choosing a node $v$ proportional to
$w_{d(v)}$, where $d(v)$ is the outdegree (number of existing children) of
$v$, and $w_0,w_1,\dots$ is a given sequence of weights.
The constant choice $w_k=1$ thus gives the random recursive tree. 
The preferential attachment tree 
made popular by \citet{BarabasiA} (as a special case of  more general
preferential attachment graphs)
is given by the choice $w_k=k+1$; 
this coincides with the 
plane oriented recursive tree earlier introduced by \citet{Szymanski}.
We shall here consider the more general linear case
\begin{equation}\label{chirho}
  w_k=\chi k+\rho
\end{equation}
for some real parameters $\chi$ and $\rho>0$,
which was introduced (at least for $\chi\ge0$) by \citet{Pittel}.
Thus the random recursive tree is obtained for $\chi=0$ and $\rho=1$, while
the standard preferential attachment tree is the case $\chi=\rho=1$.
We allow $\chi<0$, but in that case we have to assume that $\rho/|\chi|$ is
an integer, say $m$, in order to avoid negative weights. (We then have
$w_m=0$ so a node never gets more than $m$ children, and $w_k$ for $k>m$ are
irrelevant; see further \refS{S<0}.) 
See also \cite[Section 6]{SJ306} and the further references given there.
We denote the random linear preferential attachment tree with $n$ nodes and
weights \eqref{chirho} by $T\XP_n$.

\begin{remark}\label{R1}
Note that multiplying all
$w_k$ by the same positive constant will not change the trees, so only the
ratio $\chi/\rho$ is important. Hence we may normalize the parameters in
some way when convenient; however, different normalizations are convenient
in different situations, and therefore we keep the general and more flexible
assumptions above unless we say otherwise.

Note also that our assumptions imply $w_1=\chi+\rho>0$ except in the  case
$\chi=-\rho$, when $w_1=0$ and $T_n\XP$ deterministically is a path.
We usually ignore that trivial case in the sequel, and assume $\chi+\rho>0$.
\end{remark}

\begin{remark}
  The three cases $\chi>0$, $\chi=0$ and $\chi<0$ 
give the three classes of very simple increasing trees
defined and characterized by \citet{PP-verysimple}, see also
\cite{BergeronFS92} and \cite[Section 1.3.3]{Drmota}.
In fact, it suffices to consider $\chi=1$, $\chi=0$ and $\chi=-1$, see
\refR{R1}.
Then, $\chi=0$ yields the random recursive tree, as said above;
$\chi=1$ yields the generalised plane oriented recursive tree; $\chi=-1$
(and $\rho=m\in\bbN$) yields the \maryit{}, see further
\refS{S<0}. 
\end{remark}

Random split trees were defined by \citet{Devroye} as rooted trees
generated by  
a certain recursive procedure using a stream of balls added to the root.
We only need a simple but important special case (the case $s=1$, $s_0=1$,
$s_1=0$ in the notation of \cite{Devroye}), in which case the general
definition simplifies to the following (we use $\cP$ and $P_i$ instead of
$\cV$ and $V_i$ in \cite{Devroye}):

Let $b\ge2$ be fixed and let $\cP=(P_i)_1^b$ be a random vector of
probabilities: in other words, $P_i\ge0$ and $\sum_{i=1}^b P_i=1$.
Let  
$\Tb$ be the infinite rooted tree where each node has $b$ children,
labelled $1,\dots,b$, and give each node $v\in \Tb$ an independent copy
$\cP\vv=(P_i\vv)_1^b$ of $\cP$. 
(These vectors are thus random, but chosen only once and
fixed during the construction.)
Each node in $\Tb$ may hold one ball; if it does, we say that the node is
\emph{full}.  Initially all nodes are empty.
Balls arrive, one by one, to the root  of $\Tb$, and move
(instantaneously) according to the following rules.
\begin{romenumerate}
\item \label{split1}
A ball arriving at an empty node stays there, making the node full.
\item \label{split2}
A ball arriving at a node $v$ that already is full
continues to a child of $v$; the child is chosen at random, with child $i$
chosen with probability $P_i\vv$. Given the vectors $\cP\vv$, all these
choices are made independently of each other.
\end{romenumerate}
The random split tree $T_n=T_n\VP$ is the subtree of $\Tb$ consisting of the
nodes that contain the first $n$ balls.
Note that the parameters apart from $n$ 
in (this version of) the construction are
$b$ and
the random $b$-dimensional probability vector $\cP$ (or rather its
distribution); $\cP$ is called the \emph{split vector}.
%We denote the random split tree with $n$ nodes constructed above by $T\VP_n$. 

\citet{Devroye} gives several examples of this construction (and also of other
instances of his general definition). 
One of them is the random binary search tree,  
which is obtained with $b=2$ and $\cP=(U,1-U)$, with $U\sim U(0,1)$, the
uniform distribution on $\oi$.
The main purpose of the definition of random split trees is that they
encompass many 
different examples of random trees that have been studied separately; 
the introduction of split trees made it possible to treat them together.
Some general results were proved in \cite{Devroye}, and further
results and examples have been added by other authors, see for example
\cite{BroutinH,Holmgren}.

\citet{Devroye} considers only finite $b$, yielding trees $T_n$ where each
node has at most $b$ children, but the definition above of random split
trees  extends to 
$b=\infty$, when each node can have an unlimited number of children.
This is the case that we shall use. (Note that random recursive trees and
linear preferential attachment trees with $\chi>0$
do not have bounded degrees; 
see \refS{S<0} for the case $\chi<0$.)
Our purpose is to show that with this extension, 
also
%random recursive trees and preferential attachment trees
linear preferential attachment trees
are random split trees.

\begin{remark}\label{Rlabel}
The general preferential attachment tree is usually considered as an
unordered tree. 
However, it is often convenient to label the children of each node by
$1,2,3,\dots$ in the order that they appear; hence we can also regard the
tree as a ordered tree. Thus both the preferential attachment trees and the
split trees considered in the present paper can be regarded as subtrees of
the infinite  Ulam--Harris(--Neveu) tree $\cT_\infty$, which is the infinite
rooted 
ordered tree where every node has a countably infinite set of children,
labelled $1,2,3,\dots$. (The nodes of $\cT_\infty$ are all finite strings
$\iota_1\dots\iota_m \in \bbN^*:=\bigcup_0^\infty\bbN^m$, with the empty string as
the root.)

One advantage of this is that it makes it possible to talk unambiguously
about inclusions among the trees.
We note that both constructions above yield random sequences of
trees $(T_n\XP)_{n=1}^\infty$ and $(T_n\VP)_{n=1}^\infty$ that are increasing:
$T\XP_n\subset T\XP_{n+1}$ and $T\VP_n\subset T\VP_{n+1}$. 
\end{remark}

\begin{remark}
  \label{Rsplit}
The random split tree, on the other hand, is defined as an ordered tree,
with the potential children of a node labelled $1,2,\dots$. Note that these
do not 
have to appear in order; child 2 may appear before child 1, for example.

We can always consider the random split tree as unordered by ignoring the
labels. If we do so, any (possibly random) permutation of the random
probabilities $P_i$ yields the same unordered split tree.
(In particular, if $b$ is finite, then it is natural to permute $(P_i)_1^b$
uniformly at random, thus making all $P_i$ having the same (marginal)
distribution \cite{Devroye}. However, we cannot do that when $b=\infty$.) 
\end{remark}

Using the GEM and Poisson--Dirichlet distributions defined in \refS{Snot},
we can state our main result as 
follows. The proof is given in \refS{Spf}, using Kingman's paintbox
representation of exchangeable partitions. (\refApp{AT} gives an
alternative, but related, argument using exchangeable sequences instead.)
In fact, the result can be said to be implicit in \cite{Pitman} and
\cite{Bertoin}, see \eg{} \cite[Corollary 2.6]{Bertoin}.

\begin{theorem}\label{T1}
Let  $(\chi,\rho)$ be as above, and assume $\chi+\rho>0$.
Then,
provided the trees are regarded as unordered trees,
the linear preferential attachment tree $T_n\XP$ has, for every $n$,
the same distribution as the random split tree $T_n\VP$
with $b=\infty$ and $\cP\sim\GEM\bigpar{\chi/(\chi+\rho),\rho/(\chi+\rho)}$,

Moreover, 
%again ignoring the order of the children of each node, or 
(re)la\-belling the children of each node in order of appearance,
the sequences $(T_n\XP)_1^\infty$  and $(T_n\VP)_1^\infty$ of random trees
have the same distribution.

The same results hold also if we instead let $\cP$ have the Poisson--Dirichlet
distribution $\PD\bigpar{\chi/(\chi+\rho),\rho/(\chi+\rho)}$.
\end{theorem}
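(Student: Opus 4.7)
The plan is to exploit the recursive structure of the preferential attachment tree at the root, identify the partition of non-root nodes into root-subtrees as an instance of the two-parameter Chinese Restaurant Process (CRP), and invoke Kingman's paintbox theorem to obtain the GEM split vector; combined with a recursive application to each subtree, this will identify the PA tree with the split tree. First I would compute the PA transitions at the root. With weights $w_k=\chi k+\rho$, the sum of out-degrees in a tree of $n$ nodes equals $n-1$, so the total weight of the tree is $(\chi+\rho)n-\chi$; applied to any subtree of the root, the same identity gives that a subtree of size $n_i$ has total weight $(\chi+\rho)n_i-\chi$. Writing $\alpha=\chi/(\chi+\rho)$ and $\theta=\rho/(\chi+\rho)$ (so $\alpha+\theta=1$), node $n+1$ then joins root-subtree $i$ with probability $(n_i-\alpha)/(n-\alpha)$ and starts a new root-subtree with probability $(K\alpha+\theta)/(n-\alpha)$, where $K$ is the current number of root-subtrees. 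These are exactly the CRP$(\alpha,\theta)$ transitions (with $m=n-1$ customers), so the partition of $\{2,\dots,n\}$ into root-subtrees is generated by CRP$(\alpha,\theta)$.

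By the exchangeability of this CRP and Kingman's paintbox theorem, the partition admits a paintbox representation with random vector $\cP=(P_i)_1^\infty$ such that, conditional on $\cP$, each non-root node is assigned independently to subtree $i$ with probability $P_i$; listing the blocks in order of appearance yields $\cP\sim\GEM(\alpha,\theta)$, while listing them in decreasing order yields $\cP\sim\PD(\alpha,\theta)$. The remaining ingredient is the branching property of the PA dynamic: conditional on which root-subtree each node joins (with the induced arrival order), each subtree is itself an independent PA$(\chi,\rho)$ tree, because once a node has been routed into subtree $i$ it attaches to a parent there with probability proportional to that parent's $w_d$---exactly the PA rule on the smaller tree. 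Induction on $n$ then identifies the PA tree with the split tree whose split vector at every node is $\GEM(\alpha,\theta)$: at the root a ball's routing to a child matches the paintbox, and the recursion handles the rest.

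For the sequence statement one must check that the full increasing sequences $(T_n\XP)_1^\infty$ and $(T_n\VP)_1^\infty$ agree in law, not merely their marginals. This follows from the sequential and compatible nature of all three constructions---PA in order of arrival, CRP in order of customers, and the split tree in order of balls---together with the observation that labelling the children of each node by order of appearance is natural in both trees and, for the split tree, coincides with the size-biased order of the GEM split vector at that node. The $\PD$ version is then obtained by \refR{Rsplit}, via a uniformly random permutation of the labels at each node, since GEM is the size-biased reordering of PD. I expect the main difficulty to lie not in any individual step---each is essentially standard---but in the clean bookkeeping needed to pass among these four representations (PA transitions, CRP, paintbox, split-tree ball routing) while tracking the labelling so that the sequence statement, and not just the marginal one, follows.
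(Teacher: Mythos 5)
Your proposal is correct and follows essentially the same route as the paper: decompose the PA dynamics at the root, recognize the root-subtree partition as the two-parameter Chinese restaurant process (the paper verifies exchangeability by computing the product formula for the label sequence directly, and identifies the GEM limit in its Lemma~\ref{L2} by citing Pitman's CRP theorem), apply Kingman's paintbox, and recurse. The only point you gloss over that the paper treats explicitly is that the paintbox has no dust component ($\sum_i P_i=1$ a.s., so no singleton blocks), which is needed before the routing at the root matches the split-tree rule exactly; this follows from each nonempty principal subtree growing to infinity.
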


The result extends  to the trivial case $\chi+\rho=0$, with
$\cP\sim\GEM(0,0)=\PD(0,0)$, \ie, $P_1=1$; in this case $T_n$ is a path.

\begin{corollary}\label{CR}
The sequence of random recursive trees $(T_n)\ao=(T_n^{0,1})\ao$ has
the same distribution as the sequence of random split trees $(T_n\VP)\ao$
with  $\cP\sim\GEM(0,1)$
or $\cP\sim\PD(0,1)$ (as unordered trees).
\end{corollary}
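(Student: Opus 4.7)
The plan is to obtain the corollary as an immediate specialization of \refT{T1}. The random recursive tree is, by the discussion in the introduction, exactly the linear preferential attachment tree with weight sequence $w_k\equiv 1$, which corresponds to $\chi=0$ and $\rho=1$; thus $(T_n)_1^\infty=(T_n\XP)_1^\infty$ in the notation of the theorem for this choice of parameters.

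First I check the hypothesis of \refT{T1}: we have $\chi+\rho=0+1=1>0$, so the theorem applies. Next I compute the parameters entering the split vector: $\chi/(\chi+\rho)=0$ and $\rho/(\chi+\rho)=1$. Substituting into the conclusion of \refT{T1} then gives that, as unordered trees, $(T_n\XP)_1^\infty$ has the same distribution as $(T_n\VP)_1^\infty$ with $\cP\sim\GEM(0,1)$, which is precisely the first statement of the corollary. The variant with $\cP\sim\PD(0,1)$ follows from the final sentence of \refT{T1}, which asserts that the Poisson--Dirichlet distribution gives the same (unordered) sequence of trees.

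Since the reduction is purely a matter of substituting $(\chi,\rho)=(0,1)$ into \refT{T1}, I do not anticipate any genuine obstacle: the only thing to verify is that the GEM/PD parameters are computed correctly and that the non-triviality condition $\chi+\rho>0$ is satisfied, both of which are immediate. In particular, no independent argument (such as revisiting the paintbox construction of \refS{Spf}) is needed here.
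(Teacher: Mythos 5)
Your proposal is correct and matches the paper's intent exactly: the corollary is stated as an immediate specialization of \refT{T1} to $(\chi,\rho)=(0,1)$, with the GEM/PD parameters $\bigpar{\chi/(\chi+\rho),\rho/(\chi+\rho)}=(0,1)$ and the hypothesis $\chi+\rho>0$ trivially satisfied. No further argument is needed.
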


Recall that the split vector $\PD(0,1)$ appearing here also appears as, for
example, the asymptotic distribution of the (scaled) sizes of the cycles in
a random permutation; see \eg{} \cite[Section 3.1]{Pitman}.

\begin{corollary}\label{CPA}
The sequence of standard \pa{}  trees $(T_n)\ao\allowbreak=(T_n^{1,1})\ao$ has
the same distribution as the sequence of random split trees $(T_n\VP)\ao$
with  $\cP\sim\GEM(\frac12,\frac12)$
or $\cP\sim\PD(\frac12,\frac12)$ (as unordered trees). 
\end{corollary}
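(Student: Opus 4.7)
The plan is to deduce this corollary directly from \refT{T1} by specialising the parameters. For the standard preferential attachment tree, the weights are $w_k=k+1$, which is the linear case \eqref{chirho} with $\chi=\rho=1$. In particular $\chi+\rho=2>0$, so the main hypothesis of \refT{T1} is satisfied (and we are not in the trivial path case of \refR{R1}).

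Next, I would simply substitute these parameter values into the split vector prescribed by \refT{T1}. Since
\[
  \frac{\chi}{\chi+\rho}=\frac{1}{2},\qquad \frac{\rho}{\chi+\rho}=\frac{1}{2},
\]
\refT{T1} identifies the sequence $(T_n^{1,1})_1^\infty$ (as unordered trees) with the sequence of random split trees $(T_n\VP)_1^\infty$ where $b=\infty$ and $\cP\sim\GEM(\tfrac12,\tfrac12)$. The alternative split vector $\cP\sim\PD(\tfrac12,\tfrac12)$ then gives the same unordered random split tree, by the observation in \refR{Rsplit} that permuting the coordinates of $\cP$ does not affect the unordered tree (together with the standard fact that $\PD(\alpha,\theta)$ is the distribution of a size-biased permutation of a $\GEM(\alpha,\theta)$ sequence, and vice versa).

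There is really no obstacle to overcome: the content lies entirely in \refT{T1}, and this corollary is a direct specialisation. The only point worth verifying explicitly is that $\chi=\rho=1$ lies in the regime $\chi+\rho>0$ covered by the theorem, and that the computed $\GEM$ parameters indeed equal $(\tfrac12,\tfrac12)$.
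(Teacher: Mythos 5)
Your proposal is correct and matches the paper's (implicit) argument exactly: the corollary is a direct specialisation of Theorem~\ref{T1} to $\chi=\rho=1$, giving $\GEM(\tfrac12,\tfrac12)$, with the $\PD(\tfrac12,\tfrac12)$ alternative already supplied by the last sentence of the theorem. Nothing further is needed.
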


Note that in \refT{T1} and its corollaries above, it is important that
we ignore 
the original labels, and either regard the trees as unordered, or
(re)label the children of each node in order of appearance (see
\refR{Rlabel}); random split trees with the original labelling are
different (see \refR{Rsplit}).
In the case $\chi<0$, there is also a version for labelled trees, see 
\refT{T2}.

We give an application of \refT{T1} in \refS{Sapp}.

\section{Notation}\label{Snot}

If $T$ is a rooted tree, and $v$ is a node in $T$, then $T^v$ denotes the
subtree of $T$ consisting of $v$ and all its descendants. (Thus $T^v$ is
rooted at $v$.)

A \emph{principal subtree} (also called branch)
of $T$ is a subtree $T^v$ where $v$ is a child of
the root $o$ of $T$. Thus the node set $V(T)$ of $T$
is partitioned into $\set{o}$
and the node sets $V(T^{v_i})$ of the principal subtrees.

For a (general) preferential attachment tree, with a given weight
sequence $(w_k)_k$,
the weight of a node $v$ is $w_{d(v)}$, where $d(v)$ is the outdegree of
$v$. The (total) weight $w(S)$ of a set $S$
of nodes is the sum of the weights of the nodes in $S$; if $T'$ is a tree,
we write $w(T')$ for $w(V(T'))$.

The Beta distribution $B(\ga,\gb)$ 
is for  $\ga,\gb>0$, as usual, the distribution on $\oi$
with density function $c x^{\ga-1}(1-x)^{\gb-1}$, with the normalization
factor $c=\gG(\ga+\gb)/\bigpar{\gG(\ga)\gG(\gb)}$. 
We allow also the limiting cases
$B(0,\gb):=\gd_0$ ($\gb>0$)  and
$B(\ga,0):=\gd_1$ ($\ga>0$), \ie, the distributions of the deterministic
variables 0 and 1, respectively.

The GEM distribution $\GEM(\ga,\theta)$ is the distribution of a random
infinite vector of probabilities $(P_i)_1^\infty$ that can be represented as
\begin{equation}\label{gem}
  P_i=Z_i\prod_{j=1}^{i-1}(1-Z_j),
\qquad j\ge1,
\end{equation}
where the $Z_j$ are independent random variables with Beta distributions
\begin{equation}
  \label{gemZ}
Z_j\sim B(1-\ga,\theta+j\ga). 
\end{equation}
Note that \eqref{gem} has the interpretation that $P_1=Z_1$, $P_2$ is a
fraction $Z_2$ of the remaining probability $1-P_1$, $P_3$ is a fraction
$Z_3$ of the remainder $1-P_1-P_2=(1-Z_1)(1-Z_2)$, and so on.
Here the parameters $\ga$ and $\theta$ are assumed to satisfy
$-\infty<\ga<1$ and $\theta+\ga\ge0$; furthermore, if $\ga<0$, then
$\gth/|\ga|$ has to be an integer.
(If $\ga<0$ and $\gth=m|\ga|$, then $Z_m=1$, and thus \eqref{gem} yields
$P_{i}=0$ for all $i>m$; hence it does not matter that $Z_j$ really is
defined only for $j\le m$ in this case.)
See further \eg{} \cite[Section 3.2]{Pitman}.

The Poisson--Dirichlet distribution $\PD(\ga,\theta)$ is the distribution of
the random infinite vector $(\hat P_i)\ao$ obtained by reordering
$(P_i)\ao\sim\GEM(\ga,\theta)$  in decreasing order.

\section{Proof of \refT{T1}}\label{Spf}

\begin{lemma}\label{L1}
With the linear weights \eqref{chirho}, a tree $T$ with $m$ nodes has total
weight $w(T)=(m-1)\chi+m\rho=m(\chi+\rho)-\chi$.
\end{lemma}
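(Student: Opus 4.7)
The plan is to unfold the definition of total weight using linearity, then invoke the elementary fact that a rooted tree on $m$ nodes has exactly $m-1$ edges.

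More concretely, I would start from
\[
w(T)=\sum_{v\in V(T)} w_{d(v)} = \sum_{v\in V(T)}\bigpar{\chi d(v)+\rho} = \chi\sum_{v\in V(T)} d(v) + m\rho,
\]
using the definition $w_k=\chi k+\rho$ from \eqref{chirho} and the fact that $T$ has $m$ nodes. The only nontrivial ingredient is the value of $\sum_{v} d(v)$.

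Next I would observe that since $T$ is a rooted tree, each non-root node is the child of exactly one other node, so each of the $m-1$ edges of $T$ is counted exactly once when we sum outdegrees. Hence $\sum_{v\in V(T)} d(v)=m-1$. Substituting gives $w(T)=(m-1)\chi+m\rho$, which equals $m(\chi+\rho)-\chi$ upon rearrangement.

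There is no real obstacle here; the lemma is a one-line computation whose only content is the handshake-style identity for outdegrees in a rooted tree. The reason to single it out as a lemma is presumably that the identity will be invoked repeatedly in \refS{Spf} when computing the conditional probability that a new node attaches to a given subtree, so it is convenient to have the closed form $m(\chi+\rho)-\chi$ named once and for all.
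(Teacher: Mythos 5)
Your proof is correct and follows exactly the same route as the paper's: expand $w(T)=\sum_v(\chi d(v)+\rho)$ by linearity and use the fact that the outdegrees of a rooted tree on $m$ nodes sum to $m-1$. Nothing is missing.
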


\begin{proof}
  Let the nodes have outdegrees $d_1,\dots,d_m$.
Then $\sumim d_i=m-1$, and the weight of the tree is thus
\begin{equation*}
w(T)=
  \sumim (\chi d_i+\rho)
=  \chi \sumim  d_i+m\rho=(m-1)\chi+m\rho.
\qedhere
\end{equation*}
\end{proof}
 
\begin{lemma}\label{L2}
  Consider the sequence of \lpa{} trees $(T_n)_1^\infty=(T\XP_n)_1^\infty$,
  with the 
  children of the root labelled in order of appearence.
Let $N_j(n):=|T^j_n|$,  the size of the $j$-th principal subtree of $T_n$.
Then $N_j(n)/n\to P_j$ \as{} as \ntoo, for every $j\ge1$ and some random
variables $P_j$
with the distribution 
$\GEM\bigpar{\chi/(\chi+\rho),\rho/(\chi+\rho)}$.
(In the trivial case $\chi+\rho=0$, interpret this as $\GEM(0,0)$.)
\end{lemma}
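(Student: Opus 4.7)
The plan is to recognise the evolution of the sizes $(N_j(n))_{j\ge 1}$ of the principal subtrees as precisely a two-parameter Chinese Restaurant Process (CRP), and then to quote the classical almost sure convergence of its block sizes to the $\GEM$ distribution.

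First, I would compute the one-step transition probabilities using \refL{L1}. Suppose that at time $n$ the root of $T_n$ has $k=k(n)$ children, so $T_n$ has $k$ principal subtrees with $N_1(n)+\dots+N_k(n)=n-1$. Since the outdegree of every non-root node inside a principal subtree $T^{v_j}_n$ coincides with its outdegree in the whole of $T_n$, applying \refL{L1} to $T^{v_j}_n$ gives $w(T^{v_j}_n)=N_j(n)(\chi+\rho)-\chi$. The root itself contributes weight $w_k=k\chi+\rho$, and \refL{L1} also gives $w(T_n)=n(\chi+\rho)-\chi$. Setting $\alpha:=\chi/(\chi+\rho)$ and $\theta:=\rho/(\chi+\rho)$, so that $\alpha+\theta=1$, the conditional probabilities that the $(n+1)$-th node attaches to the existing principal subtree $j$, respectively opens a new one, are
\begin{equation*}
\frac{N_j(n)-\alpha}{n-\alpha}=\frac{N_j(n)-\alpha}{(n-1)+\theta}
\qquad\text{and}\qquad
\frac{\theta+k\alpha}{(n-1)+\theta}.
\end{equation*}

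These are precisely the transition probabilities of the two-parameter CRP with parameters $(\alpha,\theta)$, the $m$-th customer being identified with the $(m+1)$-th tree node (so that the root is excluded and tables appear in their order of creation, matching our labelling convention for principal subtrees). Under our hypotheses $(\alpha,\theta)$ lies in the admissible parameter range for the two-parameter family: if $\chi\ge 0$ then $\alpha\in[0,1)$ and $\theta>0$, while if $\chi<0$ and $\rho=m|\chi|$ then $\alpha<0$ and $\theta/|\alpha|=m\in\bbN$. The conclusion $N_j(n)/n\to P_j$ \as{} with $(P_j)_1^\infty\sim\GEM(\alpha,\theta)$ is then the classical almost sure limit theorem for the CRP; see \eg{} \cite[Section~3]{Pitman}. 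One proof proceeds directly via the stick-breaking description \eqref{gem}--\eqref{gemZ}, another via Kingman's paintbox applied to the exchangeable partition of $\bbN$ that the CRP generates.

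The main obstacle is really just the bookkeeping in the first step---in particular, matching the normalisation $n(\chi+\rho)-\chi$ from \refL{L1} with the CRP denominator $(n-1)+\theta$, which is exactly what the identity $\alpha+\theta=1$ absorbs. The trivial case $\chi+\rho=0$ is handled directly: the tree is deterministically a path, so $N_1(n)=n-1$ and $N_1(n)/n\to 1$, consistent with the convention that $\GEM(0,0)$ puts all mass on $(1,0,0,\dots)$.
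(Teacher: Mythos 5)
Your proposal is correct and follows essentially the same route as the paper: after disposing of the trivial case $\chi+\rho=0$, identify the evolution of the principal subtree sizes with the two-parameter Chinese restaurant process with parameters $\bigl(\chi/(\chi+\rho),\rho/(\chi+\rho)\bigr)$ and invoke Pitman's almost sure convergence theorem for the CRP (the paper normalises to $\chi+\rho=1$ first, which is only cosmetic). Your explicit computation of the transition probabilities via Lemma~\ref{L1} and the check of the admissible parameter range are exactly the bookkeeping the paper leaves implicit; the paper additionally records a self-contained P\'olya-urn proof in the appendix, but that is presented only as an alternative.
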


\begin{proof}
The case $\chi+\rho=0$ is trivial, with $N_1(n)=n-1$ and $P_1=1$.
Hence we may assume that $\chi+\rho>0$.
Furthermore, see \refR{R1}, we 
may and shall, for convenience, assume that 
\begin{equation}\label{=1}
\chi+\rho=1.  
\end{equation}

The lemma now follows from \citet[Theorem 3.2]{Pitman}, which is stated for 
``the Chinese restaurant with the $(\ga,\theta)$ seating plan'',
since we may regard the principal subtrees as tables in a Chinese restaurant
(ignoring the root), and then the \pa{} model with \eqref{chirho} translates
into the $(\chi,\rho)$ seating plan as defined in \cite{Pitman}.
(Cf.\ the bijection between recursive trees and permutations in
\cite[Section 6.1.1]{Drmota}, which yields this correspondence;
the uniform case treated there is the case $(\chi,\rho)=(0,1)$, which yields
the usual Chinese restaurant process.)

For completeness, we give a direct proof using \Polya{} urns in \refApp{AL2}.
\end{proof}

\begin{proof}[Proof of \refT{T1}]
  Recall that in the (general) preferential attachment tree, the parent $u$ of a
new node
is chosen to be a  node $v$
with probability proportional to the current weight $w_{d(v)}$
of the node. We can make this random choice in several steps, by first
deciding randomly
whether $u$ is the root or not, and if not, which principal
subtree it belongs to, making this choice with probabilities proportional to
the total weights of these sets of nodes. If $u$ is chosen to be in a
subtree $T^w$, we then continue recursively inside this tree, by deciding
randomly whether $u$ is the root of $T^w$ or not, and if not, which principal
subtree of $T^w$ it belongs to, again with probabilities proportional to
the total weights, and so on.

Consequently, the general preferential attachment tree can be constructed 
recursively using a stream of new nodes (or balls) similarly to the random
split tree, with the rules:
\begin{romxenumerate}
\item \label{pax1}
A ball arriving at an empty node stays there, making the node full.
\item \label{pax2}
A ball arriving at a node $v$ that already is full
continues to a child of $v$. The child is chosen at random;
if $v$ has $d$ children $v_1,\dots,v_d$, then
the ball is passed to child $i$ with probability $cw(T^{v_i})$ for each
$i=1,\dots,m$, and to the new child $m+1$ with probability $cw(v)=c(\chi
d+\rho)$, where $c=1/w(T^v)$ is a positive normalization factor.
\end{romxenumerate}

Thus both the random split trees and the linear preferential attachment trees
can be constructed recursively, and in order to show \refT{T1},
it suffices to show that the two constructions yield the same result at the
root, i.e., that balls after the first are passed on to the
children of the root in the same way in both random trees.
(Provided we ignore the order of the children, or (re)label the children in
order of appearance.)

Consider the linear preferential attachment tree with the construction
above. As in the proof of \refL{L2}, we may assume that \eqref{=1} holds.

Label the children of the root in order of appearance, see \refR{Rlabel}.
The first ball stays at the root, while all others are passed on; we
label each ball after the first by the label of the child of the root that
it is passed to.
This gives a random sequence $(X_i)_{i=1}^\infty$ of labels in $\bbN$,
(where $X_i$ is the label of ball $i+1$, the $i$th ball that is passed on).
By construction, the random sequence $(X_i)_i$ is such that the
first 1 appears before the first 2, which comes before the first 3, and so on;
we call a finite or infinite sequence $(x_i)_i$ of labels in $\bbN$
\emph{acceptable} if it has this property. 

Let
$(x_i)_1^n$ be a finite acceptable sequence of length $n\ge0$, and let $n_k$ be the
number of times $k$ appears in the sequence; further, let $d_n$ be the largest
label in the sequence, so $n_k\ge1$ if $1\le k\le d_n$, but $n_k=0$ if $k>d$.
If $(X_i)_1^n=(x_i)_1^n$, then the subtree $T^k$ with label $k$ has $n_k$ nodes,
and thus by \refL{L1} and our assumption \eqref{=1} 
weight $n_k(\chi+\rho)-\chi=n_k-\chi$, provided $k\le d_n$, while
the root has weight $\chi d_n+\rho$. 
Hence, by the construction above, noting that the tree has $n+1$ nodes and
thus by \refL{L1} weight $(n+1)-\chi=n+\rho$,
\begin{equation}
  \P\bigpar{X_{n+1}=k\mid (X_i)_1^n=(x_i)_1^n}=
  \begin{cases}
    (n_k-\chi)/(n+\rho), & 1\le k\le d_n,
\\
(d_n\chi+\rho)/(n+\rho), &  k= d_n+1.
  \end{cases}
\end{equation}
It follows by multiplying these probabilities for $n=0$ to $N-1$
and rearranging factors in the numerator (or by induction) that,
letting $d:=d_N$ and $N_k:=n_k$ for $n=N$, 
\begin{equation}
  \begin{split}
\P\bigpar{(X_i)_1^N=(x_i)_1^N}
= \frac{\prod_{j=0}^{d-1}(j\chi+\rho)\prod_{k=1}^d  \prod_{n_k=1}^{N_k-1}(n_k-\chi)}
{\prod_{n=0}^{N-1}(n+\rho)}.
  \end{split}
\end{equation}
In particular, note that this probability depends on the sequence
$(x_i)_1^N$ only through the numbers $N_k$.
Consequently, if $(x_i')_1^N$ is another acceptable sequence that is a
permutation 
of $(x_i)_1^N$, then 
\begin{equation}
  \label{exch}
\P\bigpar{(X_i)_1^N=(x_i)_1^N}=\P\bigpar{(X_i)_1^N=(x_i')_1^N}.
\end{equation}

Return to the infinite sequence $(X_i)_1^\infty$. 
This sequence encodes a
partition of $\bbN$ into the sets $A_j:=\set{k\in\bbN:X_k=j}$, 
and interpreted in this way, \eqref{exch} says that the random partition
$\set{A_j}_j$ of $\bbN$ is an exchangeable random partition; see \eg{} 
\cite[Section 2.3.2]{Bertoin} or \cite[Chapter 2]{Pitman}.
(See \refApp{AT} for a version of the argument without using the
theory of exchangeable partitions.)
By Kingman's paintbox representation theorem
\cite{Kingman-partition,Kingman-coalescent,Pitman,Bertoin},
any exchangeable random partition of $\bbN$ can be constructed as follows
from some random subprobability vector $(P_i)_1^\infty$, \ie, a
random vector with $P_i\ge0$ and $\sum_iP_i\le1$: Let
$P_\infty:=1-\sum_{i<\infty}P_i\ge0$. Let $Y_i\in\bbN\cup\set\infty$ be
\iid{} random variables with the distribution $(P_i)_1^\infty$.
Then the equivalemce classes are $\set{i:Y_i=k}$ for each $k<\infty$, and the
  singletons $\set{i}$ for each $i$ with $Y_i=\infty$.

In the present case, 
\refL{L2} shows that every principal subtree $T^j$ satisfies either
$|T^j(n)|\to\infty$ 
as \ntoo, or $T^j(n)$ is empty for all $n$ (when $\chi<0$ and $\rho=m|\chi|$
with $m<j$). Hence, the equivalence classes defined by $(X_i)_1^\infty$ are
either empty or infinite, so there are no singletons. Thus $P_\infty=0$, and
$(P_i)_1^\infty$ is a random probability vector.
Moreover, the paintbox construction is precisely what the split tree
construction \ref{split1}--\ref{split2} does at the root, provided we
ignore the labels on the children.

Consequently, the sequence of random split trees $T_n\VP$ with this random
split vector $\cP=(P_i)_1^\infty$  has
the same distribution as the sequence $(T_n\XP)\ao$, provided that
we ignore the labels of the children, or (equivalently) 
relabel the children of a
node in the split trees by their order of appearance.
It remains to identify the split vector $\cP$.

Let $T^j_n$ be the principal subtree of the split tree $T_n\VP$ whose root
is labelled $j$, and let $N_j(n):=|T^j_n|$. Then, by the law of large
numbers, as \ntoo,   
\begin{equation}\label{LLN}
N_j(n)/n\asto P_j,
\qquad j\ge1. 
\end{equation}
Recall that we may permute the probabilities $P_i$ arbitrarily, see
\refR{Rsplit}. Let us relabel the children of the root in their order of
appearance, and permute the $P_i$ correspondingly; thus \eqref{LLN} still holds.
Moreover, we have shown that the tree also can be regarded as a \lpa{} tree,
and with this labelling of the children, \refL{L2} applies.
Consequently, \eqref{LLN} and \refL{L2} yield $(P_i)\ao\sim\GEM(\chi,\rho)$.

Finally, $\PD(\chi,\rho)$ is by definition a permutation of
$\GEM(\chi,\rho)$, and thus these two split vectors define random split
trees with the same distribution (as unordered trees).
\end{proof}

\section{An auxiliary result}\label{Saux}

In the theory of random split trees, an important role is
played by the random variable $W$ defined as a size-biased sample from the
split vector $\cP$; in other words, we first sample $\cP=(P_i)\ao$, then
sample $I\in\bbN$ with the distribution $\P(I=i)=P_i$, and finally let
$W:=P_I$.
Consequently, for any $r\ge0$,
\begin{equation}\label{sizebiased}
\E W^r =\E \sum_i P_i P_i^t=\sum_i \E P_i^{t+1}.  
\end{equation}

We have a simple result for the distribution of $W$ in our case.

\begin{lemma}\label{LW}
  For the random split tree in \refT{T1}, 
$W\sim B\bigpar{\rho/(\chi+\rho),1}$. Thus $W$ has density function 
$\gam x^{\gam-1}$ on $(0,1)$, where $\gam=\rho/(\chi+\rho)$.
\end{lemma}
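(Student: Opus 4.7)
The plan is to identify the size-biased pick $W$ with the first coordinate $P_1$ of the stick-breaking representation \eqref{gem}, and then read off its distribution directly from \eqref{gemZ}. I would set $\ga:=\chi/(\chi+\rho)$ and $\theta:=\rho/(\chi+\rho)=\gam$, so that by \refT{T1}, $\cP=(P_i)\ao\sim\GEM(\ga,\theta)$.

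The key input I would invoke is the invariance of $\GEM(\ga,\theta)$ under size-biased permutation: the size-biased permutation of a $\GEM(\ga,\theta)$ sequence is again distributed as $\GEM(\ga,\theta)$ (see \eg{} \cite[Section 3.2]{Pitman}). Consequently, the first coordinate of such a size-biased permutation, which is by definition a size-biased pick from $\cP$, has the same distribution as $P_1$, so $W\eqd P_1$.

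From \eqref{gem}, $P_1=Z_1$, and \eqref{gemZ} with $j=1$ gives $Z_1\sim B(1-\ga,\theta+\ga)$. With our parameters, $1-\ga=\rho/(\chi+\rho)=\gam$ and $\theta+\ga=1$, so $W\sim B(\gam,1)$. The asserted density $\gam x^{\gam-1}$ on $(0,1)$ then follows from the definition of the Beta distribution in \refS{Snot}, since the normalizing constant reduces to $\gG(\gam+1)/[\gG(\gam)\gG(1)]=\gam$.

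The only non-mechanical step is the appeal to size-biased invariance of GEM; everything else is parameter substitution. As a back-up plan, if a self-contained argument were preferred, I would instead compute $\E W^r$ directly from \eqref{sizebiased}, using the independence of the $Z_j$ in \eqref{gem} together with the Beta moment formula, telescope the resulting product of Gamma ratios over $j$, and match against the moments $\gam/(\gam+r)$ of $B(\gam,1)$; this is elementary but notationally more tedious.
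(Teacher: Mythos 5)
Your proposal is correct, but it reaches the key identity $W\eqd P_1$ by a different route than the paper. You invoke the general invariance of $\GEM(\ga,\theta)$ under size-biased permutation (citing \cite[Section 3.2]{Pitman}), which immediately gives that a size-biased pick from $\cP$ has the law of $P_1$; the remaining parameter substitution ($1-\ga=\gam$, $\theta+\ga=1$, hence $W\sim B(\gam,1)$) matches the paper exactly. The paper instead proves $W\eqd P_1$ from its own machinery: it tracks $X_n$, the number of descendants of the first node added after the root, shows via the law of large numbers that $X_n/n\asto P_I=W$ in the split-tree picture, and via \refL{L2} that $X_n/n\asto P_1$ in the preferential-attachment picture (where that node always receives label $1$), then uses \refT{T1} to equate the two limits in distribution. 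The paper's remark immediately after the proof explicitly notes that its conclusion is a special case of the size-biased invariance you cite, so your shortcut is legitimate; it buys brevity at the cost of outsourcing the main step to an external theorem, whereas the paper's argument is self-contained given \refT{T1} and \refL{L2} and incidentally reproves the invariance in this special case. Your back-up plan via \eqref{sizebiased} and telescoping Beta moments is essentially the computation the paper carries out (for $r=1$) in the example following the lemma, where it is described as more complicated than the given proof.
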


\begin{proof}
  Let $X_n$ be the number of nodes in $T_n$ that are descendants of the
  first node added after the root.
In the split tree $T_n\VP$, let $I$ be the label of the subtree containing
the first node added after the root.
Conditioned on the split vector $\cP$ at the root, by definition 
$\P(I=i\mid\cP)=P_i$. Furthermore, still conditioned on $\cP$,
the law of large numbers yields that if $I=i$, then $X_n/n\asto P_i$.
Hence, $X_n/n\asto P_I=W$.

On the other hand, in the \pa{} tree $T_n\XP$
with children  labelled in order of appearance, 
the first node after the root always gets label 1 
and thus in the notation of
\refL{L2}, $X_n=N_1(n)$. Consequently, \refL{L2} implies
$X_n/n\asto P_1$. Since \refT{T1} implies that $X_n$ has the same
distribution in the two cases, $W\eqd P_1$.
Furthermore, by \eqref{gem}--\eqref{gemZ}, assuming again for simplicity
\eqref{=1},  
$P_1=Z_1\sim B(1-\chi,\chi+\rho)=B(\rho,1)$. 
\end{proof}

Thus $W\eqd P_1$ for our GEM distribution. This is only a special case of
the general result that rearranging the $P_i$ in size-biased order preserves
$\GEM(\ga,\theta)$ for any pair of parameters, see \cite[Section 3.2]{Pitman}.

\begin{example}
  By \refL{LW} we have $\E W=\gam/(\gam+1)$, and thus by \eqref{sizebiased}
  \begin{equation}\label{hex}
     \sumi \E P_i^2=\E W = \frac{\rho}{\chi+2\rho}.
  \end{equation}
It is possible to calculate the sum in \eqref{hex} directly, using
the definitions \eqref{gem}--\eqref{gemZ}, but the calculation is rather
complicated: 
\begin{equation}
  \begin{split}
\sumi\E P_i^2& = \sumi \E Z_i^2\prod_{j<i}\E(1-Z_j)^2
\\&
=\sumi \frac{(1-\ga)(2-\ga)\prod_{1}^{i-1}(\theta+j\ga)(\theta+1+j\ga)}   
{\prod_{1}^{i}(\theta+1+(j-1)\ga)(\theta+2+(j-1)\ga)} 
\\&
= \frac{(1-\ga)(2-\ga)}{(\theta+1)(\theta+2)}
\sumi \prod_{1}^{i-1}\frac{\theta+j\ga}{\theta+2+j\ga}.
  \end{split}
\end{equation}
The last sum can be summed, for example by writing it as a hypergeometric
function $F(\theta/\ga+1,1;(\theta+2)/\ga+1;1)$ and using Gauss's formula 
\cite[(15.4.20)]{NIST}, leading to \eqref{hex}. 
The proof above seems simpler.
\end{example}

\section{An application}\label{Sapp}

\citet{Devroye} showed general results on the height and insertion depth for
split trees, and used them to give results for various examples.
The theorems in \cite{Devroye}  assume that the split vectors are finite, so
the trees have bounded degrees, but they may be extended to the present
case, using \eg{} (for the height) results on branching random walks
\cite{Biggins76,Biggins77} 
and methods of  \cite{BroutinDevroye}, \cite{BroutinDevroyeEtAl2008}. 
However, for the \lpa{} trees, the height and
insertion depth are well known by other methods, see \eg{} \cite{Pittel},
\cite{SJ306}; hence we give instead another application. %\cite{Section 6.3}.

For a rooted tree $T$, let $h(v)$ denote the depth of a node $v$, \ie, its
distance to the root. Furthermore, for two nodes $v$ and $w$, let $v\land w$
denote their last common ancestor.
We define
\begin{equation}\label{Y}
Y=Y(T):=\sum_{v\neq w} h(v\land w),  
\end{equation}
summing over all pairs of distinct nodes. (For definiteness, we sum over
ordered pairs; summing over unordered pairs is the same except for a factor
$\frac12$. We may modify the definition by including the case $v=w$; this
adds the total pathlength which \as{} is of order $O(n\log n)$,
see \eqref{hh} below,
and thus  does not
affect our asymptotic result.)

The parameter $Y(T)$ occurs in various contexts. For example, if $\hW(T)$
denotes the Wiener index and $\hP(T)$ the total pathlength of $T$, then
$Y(T)=\hW(T)-(n-1)\hP(T)$, see \cite{SJ146}. Hence, for the random recursive
tree and binary search tree considered in \cite{Neininger-wiener},
the theorems there imply
convergence of $Y_n/n^2$ in distribution.
We extend this to convergence \as, and to all \lpa{} trees,
with  characterizations of the limit distribution $Q$ that are different
from the one given in \cite{Neininger-wiener}.

\begin{theorem}\label{TQ}
 Consider random split trees $T_n\VP$
of the type defined in the introduction
for some random split vector $\cP=(P_i)\ao$, and let $Y_n:=Y(T_n\VP)$ be given
by \eqref{Y}.
Assume that with positive probability,  $0<P_i<1$ for some $i$.
Then there exists a random variable $Q$ such that $Y_n/n^2\asto Q$ as \ntoo.
Furthermore, $Q$ has the representation in \eqref{Q} below
and satisfies
\begin{equation}\label{EQ0}
  \E Q= \frac{1}{1-\E\sum_i P_i^2}-1
<\infty,
\end{equation}
and the distributional fixed point equation
\begin{equation}\label{Qeq}
  Q\eqd\sumi P_i^2(1+Q^{(i)}),
\end{equation}
with all $Q^{(i)}$ independent of each other and of $(P_i)\ao$, and with
$Q^{(i)}\eqd Q$.

If $W$ is the size-biased splitting variable defined in \refS{Saux},
then also
\begin{equation}\label{EQW}
  \E Q
=\frac{\E W}{1-\E W}.
\end{equation}
\end{theorem}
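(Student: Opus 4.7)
The plan is to reduce $Y_n$ to a sum of squared subtree sizes and then identify its almost-sure limit on the infinite random split tree $\Tb$ equipped with its family of independent split vectors $(\cP^{(v)})_{v\in\Tb}$. Swapping the order of summation in \eqref{Y}, and using that the non-root common ancestors of a pair $v,w$ are exactly the nodes $u\neq o$ with $v,w\in T_n^u$,
\begin{equation*}
Y_n=\sum_{u\in T_n,\,u\neq o}|T_n^u|\bigl(|T_n^u|-1\bigr).
\end{equation*}
Separating the contributions of the principal subtrees $T_n^j$ of sizes $N_j(n)$ at the root yields the recursion
\begin{equation*}
Y_n=\sum_j Y\bigl(T_n^j\bigr)+\sum_j N_j(n)^2-(n-1).
\end{equation*}

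For $u\in\Tb$ at depth $k\geq 1$ with root-to-$u$ path $o=v_0,v_1,\dots,v_k=u$, set $\pi_u:=\prod_{j=1}^k P^{(v_{j-1})}_{i_j}$, where $v_j$ is the $i_j$-th child of $v_{j-1}$. By the standard law of large numbers for split trees (\refL{L2} applied recursively inside each subtree, which is itself a random split tree with the same split distribution), $|T_n^u|/n\to\pi_u$ a.s.\ for every $u\in\Tb$. I then define
\begin{equation*}
Q:=\sum_{u\in\Tb,\,u\neq o}\pi_u^2,
\end{equation*}
which is the intended representation of $Q$. A level-by-level computation combined with \eqref{sizebiased} for $r=1$ gives $\E\sum_{h(u)=k}\pi_u^2=\bigl(\E\sum_i P_i^2\bigr)^k=(\E W)^k$; the hypothesis that $0<P_i<1$ holds for some $i$ with positive probability forces $\E\sum_i P_i^2<1$ strictly, so the geometric series yields $\E Q=\E W/(1-\E W)<\infty$, which is \eqref{EQW}, and hence also \eqref{EQ0}. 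The fixed-point equation \eqref{Qeq} is immediate from the definition of $Q$ by splitting according to which child of the root lies on the path to $u$, using that restricted to the $i$-th principal subtree of $\Tb$, $\pi_u$ equals $P_i$ times an independent copy of the same family.

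It remains to prove the almost-sure convergence $Y_n/n^2\to Q$, which I handle by truncation at depth $K$. With $Y_n^{\leq K}:=\sum_{1\leq h(u)\leq K}|T_n^u|(|T_n^u|-1)$, the truncation is a finite sum, so $Y_n^{\leq K}/n^2\to Q^{(K)}:=\sum_{1\leq h(u)\leq K}\pi_u^2$ a.s., and $Q^{(K)}\uparrow Q$; this already gives $\liminf_n Y_n/n^2\geq Q$ a.s. For the matching upper bound, writing $J(T):=\sum_u|T^u|^2$, the remainder satisfies $Y_n-Y_n^{\leq K}\leq\sum_{v:h(v)=K+1}J(T_n^v)$, so it suffices to establish a uniform-in-$n$ moment estimate $\E J(T_n)\leq Cn^2$. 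This I would prove by induction from the recursion $\E J(T_n)=n^2+\E\sum_j J(T_n^j)$, exploiting $\E\sum_j(N_j(n)/n)^2\to\E W<1$; granted the bound, Fatou's lemma together with $|T_n^v|/n\to\pi_v$ a.s.\ gives $\limsup_n(Y_n-Y_n^{\leq K})/n^2\leq C\sum_{h(v)=K+1}\pi_v^2\to 0$ a.s.\ as $K\to\infty$. The main obstacle is precisely this uniform moment estimate: the limiting inequality $\E\sum_j(N_j(n)/n)^2\to\E W<1$ must be upgraded to a bound valid for every $n$, which requires careful handling of the error term in the induction (or equivalently a quantitative rate for the law of large numbers at each level).
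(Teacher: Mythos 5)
Your identity $Y_n=\sum_{u\neq o}|T_n^u|(|T_n^u|-1)$ is correct, your $Q=\sum_{u\neq o}\pi_u^2$ agrees with the paper's representation \eqref{Q}, and the computations of $\E Q$ and the fixed-point equation \eqref{Qeq} are sound. Your route is, however, genuinely different from the paper's: there, one lets every ball travel along an infinite path in $\cT_\infty$, sets $f(\bX_k,\bX_\ell)$ to be the length of the longest common prefix of the two paths, observes that $\hY_n=\sum_{k\neq\ell}f(\bX_k,\bX_\ell)$ is, conditionally on the split vectors, a $U$-statistic of an \iid{} sequence, and applies Hoeffding's strong law to get $\hY_n/n^2\asto Q$ at once; the correction $\hY_n-Y_n$ is then bounded by $2nH_n\xH_n=O(n\log^2n)$ \as{} This sidesteps entirely the uniform control of deep levels that your truncation scheme requires.

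That uniform control is where your argument has a genuine gap, and not quite where you locate it. The moment estimate $\E J(T_n)\le Cn^2$ does in fact go through: setting $a_n:=\E\sum_j(N_j(n)/n)^2$, dominated convergence gives $a_n\to\E W<1$, while the crude bound $a_n\le(1-1/n)^2$ handles small $n$, so $\sup_na_n<1$ and the induction closes with $C=1/(1-\sup_na_n)$. The real problem is the next step: from $\E J(T_n^v)\le C\,\E|T_n^v|^2$ you cannot conclude $\limsup_n(Y_n-Y_n^{\le K})/n^2\le C\sum_{h(v)=K+1}\pi_v^2$ \emph{almost surely}. Fatou's lemma bounds $\E\liminf$ from above by $\liminf\E$; it gives no upper bound on the $\limsup$ of the random variables themselves, and a reverse Fatou would require a dominating integrable bound you do not have. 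A first-moment bound uniform in $n$ simply does not control an almost-sure $\limsup$ (one would need, e.g., summable tail probabilities via second moments, or a monotonicity/martingale structure for $J(T_n^v)/n^2$, neither of which you establish). A second, minor point: since $b=\infty$, the truncated sum $Y_n^{\le K}$ has infinitely many terms, so ``the truncation is a finite sum'' is false; your $\liminf$ lower bound survives via Fatou for sums, but the claimed exact convergence $Y_n^{\le K}/n^2\to Q^{(K)}$ again needs justification. As written, you have proved $\liminf_nY_n/n^2\ge Q$ \as{} and the moment identities, but not the almost-sure convergence.
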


Higher moments may be calculated from \eqref{Qeq} or \eqref{Q}, with some
effort. 

\begin{proof}
  We modify the definition of split trees by never placing a ball in an node; 
we use rule \ref{split2} for all nodes, and thus each ball travels along an
infinite path, chosen randomly with probabilities determined by the split
vectors at the visited nodes.
Let $X_{k,i}$ be the number of the child chosen by ball $k$ at the $i$th
node it visits, and let $\bX_k:=(X_{k,i})_{i=1}^\infty$.
Label the nodes of $\cT_\infty$ by strings in $\bbN^*$ as in
\refR{Rlabel}. Then the path of ball $k$ is $\emptyset$, $X_{k,1}$,
$X_{k,1}X_{k,2}$, \dots, visiting the nodes labelled by initial segments of
$\bX_k$. 
Note that conditioned on the split vectors $\cV\vv$ for all $v\in\cT_\infty$,
the sequences $\bX_k$ are \iid{} random infinite sequences with the
distribution
\begin{equation}\label{emil}
  \P(X_{k,j}=i_j,\, 1\le j\le m)  
=\prod_{j=1}^m P_{i_j}^{(i_1\dotsm i_{j-1})}.
\end{equation}

For two sequences $\bX,\bX'\in\bbN^\infty$, let
\begin{equation}
  f(\bX,\bX'):=\min\set{i:X_i\neq X'_i}-1,
\end{equation}
\ie, the length of the longest common initial segment.
Let $v_k$ be the node in $T_n$ that contains ball $k$, and note that if
neither $v_k$ nor $v_\ell$ is an ancestor of the other, then 
$h(v_k\land v_\ell)=f(\bX_k,\bX_\ell)$. 

We define, as an approximation of $Y_n$, 
\begin{equation}\label{hY}
  \hY_n:=\sum_{k,\ell\le n,\; k\neq\ell}f(\bX_k,\bX_\ell)
=2\sum_{\ell<k\le n}f(\bX_k,\bX_\ell).
\end{equation}

Condition on all split vectors $\cP\vv$. Then, using \eqref{emil},
\begin{equation}\label{Q}
  \begin{split}
&  \E\bigpar{f(\bX_1,\bX_2)\mid \set{\cP\vv, v\in\cT_\infty}}
\\&\qquad
=\E \sum_{m=1}^\infty \sum_{i_1,\dots,i_m\in\bbN} 
\ett{X_{1,j}=X_{2,j}=i_j  \text{ for } j=1,\dots,m}
\\&\qquad
= \sum_{m=1}^\infty \sum_{i_1,\dots,i_m\in\bbN} 
\Bigpar{\prod_{j=1}^m P_{i_j}^{(i_1\dotsm i_{j-1})}}^2
=:Q.
  \end{split}
\end{equation}
Hence, since the split vectors are \iid,
\begin{equation}\label{EQ}
  \begin{split}
\E f(\bX_1,\bX_2)
&= \E Q 
= \sum_{m=1}^\infty \sum_{i_1,\dots,i_m\in\bbN} 
\prod_{j=1}^m \E P_{i_j}^2
=\summ \Bigpar{\sum_i \E P_i^2}^m
\\&
= \frac{1}{1-\sum_i \E P_i^2}-1.
  \end{split}
\end{equation}
Since $\sum_i P_i^2\le\sum_i P_i=1$, with strict inequlity with positive
probability, 
$\E \sum_i P_i^2<1$, and thus \eqref{EQ} shows that $\E f(\bX_1,\bX_2)<\infty$.
Consequently, \as,
\begin{equation}\label{efa}
Q= \E\bigpar{f(\bX_1,\bX_2)\mid \set{\cP\vv, v\in\cT_\infty}}<\infty. 
\end{equation}

Condition again on all split vectors $\cP\vv$. Then
 the random sequences $\bX_k$ are \iid, and thus \eqref{hY} is a $U$-statistic.
% \as{} $\E\bigpar{f(\bX_1,\bX_2)\mid \set{\cP\vv, v\in\cT_\infty}}<\infty$,
%By \eqref{efa}, the summands have finite expectation, and thus 
Hence, we can apply 
the strong law of large numbers for $U$-statistics by \citet{Hoeffding},
which shows that \as{}
\begin{equation}
  \label{hylim}
\frac{\hY_n}{n(n-1)}
\to
  \E\bigpar{f(\bX_1,\bX_2)\mid \set{\cP\vv, v\in\cT_\infty}}=Q.
\end{equation}
Consequently, also unconditionally,
\begin{equation}
  \label{hylim2}
\frac{\hY_n}{n(n-1)}
\asto Q.
\end{equation}

It remains only to prove that $(\hY_n-Y_n)/n^2\asto0$, since we already have
shown \eqref{EQ0}, 
which implies \eqref{EQW} by \eqref{sizebiased},
and  \eqref{Qeq} follows from the representation \eqref{Q}.

As noted above, if $\ell<k$, then $h(v_k\land v_\ell)=f(\bX_k,\bX_\ell)$
except possibly when $v_\ell$ is an ancestor of $v_k$; furthermore, in the
latter case 
\begin{equation}
  \label{up}
0\le h(v_k\land v_\ell)\le f(\bX_k,\bX_\ell).
\end{equation}
Let $H_n:=\max\set{ h(v):v\in T_n}$ be the height of $T_n=T_n\VP$, and let
$\xH_n:=\max\set{f(\bX_k,\bX_\ell):\ell<k\le n}$. Since a node $v_k$ has
at most $H_n$ ancestors, it follows from \eqref{up} that, writing $v\prec w$
when $v$ is ancestor of $w$,
\begin{equation}
  \label{puh}
0\le \hY_n-Y_n
= 2\sumkn \sum_{v_l\prec v_k} \bigpar{f(\bX_k,\bX_\ell) - h(v_k\land
  v_\ell)}
%\le 2\sumkn H_n\xH_n.
\le 2n H_n\xH_n.
\end{equation}
Furthermore, there is some node $v_k$ with $h(v_k)=H_n$, and if $v_\ell$ is
its parent, then $f(\bX_k,\bX_\ell)\ge H_n-1$; hence, $H_n\le\xH_n+1$.

Let $m=m_n:=\ceil{c \log n}$, where $c>0$ is a constant chosen later.
%If $\xH_n\ge m$,  then $f(X_k,X_\ell)\ge m$ for some $k$ and $\ell.
Then, arguing similarly to \eqref{Q}--\eqref{EQ},
\begin{equation}\label{Qm}
  \begin{split}
&  \P\bigpar{f(\bX_1,\bX_2)\ge m\mid \set{\cP\vv, v\in\cT_\infty}}
\\&\qquad
=\E \sum_{i_1,\dots,i_m\in\bbN} 
\ett{X_{1,j}=X_{2,j}=i_j  \text{ for } j=1,\dots,m}
\\&\qquad
=  \sum_{i_1,\dots,i_m\in\bbN} 
\Bigpar{\prod_{j=1}^m P_{i_j}^{(i_1\dotsm i_{j-1})}}^2
  \end{split}
\end{equation}
and thus, letting $a:=\sum_i\E P_i^2<1$,
\begin{equation}\label{EQm}
  \begin{split}
\P\bigpar{ f(\bX_1,\bX_2)\ge m}
=  \sum_{i_1,\dots,i_m\in\bbN} \prod_{j=1}^m \E P_{i_j}^2
%= \Bigpar{\sum_i \E P_i^2}^m
=a^m.
  \end{split}
\end{equation}
By symmetry, we thus have
\begin{equation*}
  \P(\xH_n\ge m) \le \sum_{\ell<k\le n}\P\bigpar{f(\bX_k,\bX_\ell)\ge m} \le n^2 a^m
\le n^2 a^{c\log n} \le  n\qww,
\end{equation*}
provided we choose $c\ge 4/|\log a|$.  Consequently, by the Borel--Cantelli
lemma, \as{} $\xH_n\le m-1\le c\log n$ for all large $n$. 
Hence, \as{} for all large $n$, 
\begin{equation}\label{hh}
  H_n\le\xH+1\le c\log n+1,
\end{equation}
and
\eqref{puh}
shows that \as{} $\hY_n-Y_n=O(n\log^2 n)$.
In particular, $(\hY_n-Y_n)/n^2\asto0$, which as said above 
together with \eqref{hylim2}
completes the proof. 
\end{proof}

\begin{corollary}
  Let $Y_n:=Y(T_n\XP)$ be given by \eqref{Y} for the \lpa{} tree $T_n\XP$,
  and assume $\chi+\rho>0$.
Then $Y_n/n^2\asto Q$ for some random variable $Q$ with 
\begin{equation}\label{EQXP}
  \E Q = \frac{\rho}{\chi+\rho}.
\end{equation}
\end{corollary}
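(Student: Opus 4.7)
The plan is to combine \refT{T1} with \refT{TQ}: the former realises the linear \pa{} tree as a random split tree with an explicit GEM split vector, and the latter then gives the \as{} convergence and a formula for the mean of the limit.

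First I would note that the parameter $Y(T)$ defined by \eqref{Y} depends only on the unordered tree structure, since depths and common ancestors are invariant under relabelling the children of each node. Hence, by \refT{T1}, the sequence $\bigpar{Y(T_n\XP)}_{n=1}^\infty$ has the same joint distribution as $\bigpar{Y(T_n\VP)}_{n=1}^\infty$ with $\cP\sim\GEM\bigpar{\chi/(\chi+\rho),\rho/(\chi+\rho)}$; I may therefore couple the two and prove the statement for $T_n\VP$.

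Next I would verify the hypothesis of \refT{TQ}, namely $\P(0<P_i<1\text{ for some } i)>0$. This is where we use the assumption $\chi+\rho>0$: from \eqref{gem}--\eqref{gemZ} with parameters $\ga=\chi/(\chi+\rho)$ and $\gth=\rho/(\chi+\rho)$, one has $P_1=Z_1\sim B\bigpar{\rho/(\chi+\rho),1}$ (the same computation that appears at the end of the proof of \refL{LW}), a distribution that places positive mass on $(0,1)$ as soon as $\chi+\rho>0$. Hence \refT{TQ} applies and yields $Y_n/n^2\asto Q$ for some random variable $Q$ satisfying \eqref{EQW}.

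Finally I would evaluate $\E Q$ using \eqref{EQW} and \refL{LW}. Writing $\gam=\rho/(\chi+\rho)$, \refL{LW} gives $W\sim B(\gam,1)$ with density $\gam x^{\gam-1}$ on $(0,1)$, so
\begin{equation*}
  \E W=\frac{\gam}{\gam+1}=\frac{\rho}{\chi+2\rho},
  \qquad 1-\E W=\frac{\chi+\rho}{\chi+2\rho},
\end{equation*}
and therefore $\E Q=\E W/(1-\E W)=\rho/(\chi+\rho)=\gam$, which is \eqref{EQXP}. There is no serious obstacle here: the entire corollary is essentially a direct specialisation of \refT{TQ} via the split tree representation of \refT{T1}, with the only mild point being the verification of the non-degeneracy hypothesis in the excluded trivial case $\chi+\rho=0$.
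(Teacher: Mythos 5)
Your argument is correct and is essentially the paper's own proof, which likewise deduces the corollary immediately from Theorems \ref{T1} and \ref{TQ} together with \eqref{EQW} and the value $\E W=\rho/(\chi+2\rho)$ from \eqref{hex}/\refL{LW}. Your explicit check of the non-degeneracy hypothesis of \refT{TQ} (via $P_1\sim B(\rho/(\chi+\rho),1)$, which is non-degenerate precisely because $\chi+\rho>0$) is a small but welcome addition that the paper leaves implicit.
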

\begin{proof}
  Immediate by Theorems \ref{T1} and \ref{TQ}, using \eqref{EQW} and
  \eqref{hex} to obtain \eqref{EQXP}.
\end{proof}

\section{The case $\chi<0$: \maryit{s}}\label{S<0}

In this section we
consider the case $\chi<0$ of \lpa{} tree{s} further;
as noted above, this case has some special features.
By \refR{R1}, we may assume
$\chi=-1$, and then by our assumptions, $\rho>0$ is necessarily an integer,
say $\rho=m\in\bbN$. As said in \refR{R1}, the case
$m=1$ is trivial, with $T_n\XM1$ a path, so we are mainly interested
in $m\in\set{2,3,\dots}$.

By \eqref{chirho}, $w_m=0$, and thus no node in $T_n\XM{m}$ will get more
that $m$ children. In other words, the trees will all have outdegrees
bounded by $m$.
It follows from \refL{L2}, or directly from \eqref{gem}--\eqref{gemZ}, that
if, as in \refT{T1},
$(P_i)\ao\sim\GEM\bigpar{-\frac{1}{m-1},\frac{m}{m-1}}$, 
then
$P_j=0$ for $j>m$.
Consequently, in this case, the split tree can be defined using a finite
split vector $(P_j)_1^b$ as in Devroye's original definition (with $b=m$).

Recall than an \mary{} tree is a rooted tree where each node has at most $m$
children, and the children are labelled by distinct numbers in
\set{1,\dots,m};
in other words, a node has $m$ potential children, labelled $1,\dots,m$,
although not all of these have to be present. (Potential children that are
not nodes are known as external nodes.) The \mary{} trees can also be
defined as the subtrees of the infinite \mary{} tree $\cT_m$ that contain
the root. Note that \mary{} trees are ordered, but that the labelling
includes more information than just the order of children (for vertices of
degree less than $m$).

It is natural to regard the trees $T_n\XM m$  as $m$-ary trees by labelling the 
children of a node by $1,\dots,m$ in (uniformly) random order.
It is then easy to see that the construction above, with $w_k=m-k$ by
\eqref{chirho}, is equivalent to adding each new node at random uniformly over
all positions where it may be placed
in the infinite tree $\cT_m$, \ie, by converting a uniformly chosen random
external node to a node;
see \cite[Section 1.3.3]{Drmota}.
Regarded in this way, the
trees $T_n\XM m$ are called \maryit{s} (or $m$-ary recursive trees)
See also \cite[Example 1]{BergeronFS92}.

\begin{example}\label{EBST}
  The case $\chi=-1$, $m=2$ gives,
using the construction above with \mary{} (binary) trees and external nodes, 
the random binary search tree. 
As mentioned in the introduction,
the binary search tree was one of the original examples of random
  split trees in \cite{Devroye}, with the split vector $(U,1-U)$ where
  $U\sim U(0,1)$.

Our \refT{T1} also exhibits the binary search tree as a random split tree, 
but with split vector $(P_1,1-P_1)\sim\GEM(-1,2)$ and thus, by \eqref{gemZ},
$P_1=Z_1\sim B(2,1)$. There is no contradiction, since we consider the trees
as unordered in \refT{T1}, and thus 
any (possibly random) permutation of  the split vector yields the same
trees; in this case, it is easily seen that reordering $(P_1,P_2)$ uniformly
at random yields $(U,1-U)$. ($P_1\sim B(2,1)$ has density $2x$, and
$P_2=1-P_1$ thus density $2(1-x)$, leading to a density 1 for a uniformly
random choice of one of them.) 

There are many other split vectors yielding
the same unordered trees. For example, 
\refT{T1} gives $\PD(-1,2)$ as one of them.
By definition, $\PD(-1,2)$ is obtained by ordering $\GEM(-1,2)$ in
decreasing order; by the discussion above, this is equivalent to ordering
$(U,1-U)$ in decreasing order, and it follows that the split vector
$(\hP_1,\hP_2)\sim\PD(-1,2)$ has $\hP_1\sim U(\frac12,1)$ and $\hP_2=1-\hP_1$.

For the binary search tree, Devroye's original symmetric choice $(U,1-U)$
for the split vector has the advantage that, by symmetry, 
the random split tree then
coincides with the binary search tree also as  binary trees.
\end{example}

\begin{remark}
For $m>2$, the \maryit{} considered here is not the
same as the $m$-ary search tree; the latter is also a random split tree
\cite{Devroye}, but not of the simple type studied here.
\end{remark}

\refE{EBST} shows that when $m=2$, we may see the \maryit{} as a random split
tree also when regarded as an \maryt, and not only as an unordered tree as in
\refT{T1}. 
We show next that this extends to $m>2$.
Recall that the Dirichlet distribution $\Dir(\ga_1,\dots,\ga_m)$ is a
distribution of probability vectors $(X_1,\dots,X_m)$, \ie{} random vectors
with $X_i\ge0$ and $\sum_1^m X_i=1$; the distribution has 
the density function $cx_1^{\ga_1-1}\dotsm x_m^{\ga_m-1}\dd x_1\dotsm
\dd x_{m-1}$ with the normalization factor
$c=\gG(\ga_1+\dots+\ga_m)/\prod_1^m\gG(\ga_i)$. 

\begin{theorem}\label{T2}
  Let $m\ge2$.
The sequence of \maryit{s} $(T_n)\ao=(T_n\XM m)\ao$, considered as
$m$-ary trees, has
the same distribution as the sequence of random split trees $(T_n\VP)\ao$
with the split vector 
$\cP=(P_i)_1^m\sim \Dir(\frac{1}{m-1},\dots,\frac{1}{m-1})$.
\end{theorem}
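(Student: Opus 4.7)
The plan is to match one-step transition probabilities at each internal node in the two constructions, and then invoke the classical \Polya-urn / Dirichlet correspondence.

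First, I would specialise the recursive ball-passing description of the \pa{} tree used in the proof of \refT{T1} (rules \ref{pax1}--\ref{pax2}) to $\chi=-1$, $\rho=m$. By \refL{L1}, a subtree of size $k$ has weight $(m-1)k+1$, which is also the number of external positions it contains. Let $v$ be a full internal node, let $d$ be its current outdegree, and let $b_i$ be the number of previous balls at $v$ that were passed to position $i$ (so $b_i=|T^{v_i}|$ at filled positions and $b_i=0$ at external positions). The PA rule sends the next ball into an occupied subtree $T^{v_j}$ with probability $w(T^{v_j})/w(T^v)=((m-1)b_j+1)/w(T^v)$, and into a new child with total probability $w(v)/w(T^v)=(m-d)/w(T^v)$; by the symmetry of the \maryit{} construction, this latter mass is split uniformly over the $m-d$ external positions of $v$. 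Combining the two cases, the next ball at $v$ goes to position $i$ (whether filled or external) with probability
\[
\frac{(m-1)b_i+1}{(m-1)B+m}=\frac{b_i+\kappa}{B+m\kappa},
\]
where $B=\sum_i b_i$ is the number of previous balls at $v$ passed to a child and $\kappa:=1/(m-1)$.

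Next, I would recognise the right-hand side as precisely the sampling rule of an $m$-colour \Polya{} urn with initial composition $(\kappa,\dots,\kappa)$ and unit reinforcement. By the classical \Polya{} / de Finetti / Dirichlet correspondence (see \eg{} \cite[Section 3.2]{Pitman}), an exchangeable sequence generated by such a \Polya{} urn has the same joint law as i.i.d.\ draws from a random vector $\cP\sim\Dir(\kappa,\dots,\kappa)$. Applied at each $v\in\cT_m$, this identifies the sequence of positions chosen by balls passing through $v$ with i.i.d.\ sampling from an independent $\cP\vv\sim\Dir(\kappa,\dots,\kappa)$, which is exactly the split-tree rule with that split vector.

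Finally, to lift this node-by-node agreement to equality in the joint law of the full tree sequences, I would build $(T_n\XM m)_1^\infty$ on a single probability space using pre-sampled independent infinite \Polya{}-urn sequences attached to each $v\in\cT_m$, letting each new ball follow the next unused draw at every node it visits; independence across $v$ is built in. Applying de Finetti simultaneously at every node rewrites the same construction as i.i.d.\ sampling from independent $\cP\vv\sim\Dir(\kappa,\dots,\kappa)$, yielding exactly the random split tree of the theorem. The main obstacle will be purely bookkeeping: one has to verify that this \Polya{} coupling faithfully reproduces the \maryit{} dynamics step by step, which follows from the one-step computation above together with the observation that the ball-passing description makes the choices at distinct nodes conditionally independent given the history.
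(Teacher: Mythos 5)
Your proposal is correct, and it takes a genuinely different route from the paper. The paper deduces \refT{T2} from \refT{T1}: it first observes that $P'_j=0$ for $j>m$ when $\cP'\sim\GEM\bigpar{-\frac1{m-1},\frac m{m-1}}$, symmetrizes by a uniformly random permutation of $(P'_i)_1^m$ and argues that both tree sequences are invariant under random relabellings of children (hence agree as \mary{} trees), and only then identifies the symmetrized split vector by comparing \as{} limits of the subtree proportions $N_j(n)/n$ --- on the split-tree side via the law of large numbers, on the increasing-tree side via the \as{} limit of an $m$-colour \Polya{} urn on external nodes. You instead bypass \refT{T1} entirely: your one-step computation $(b_i+\kappa)/(B+m\kappa)$ with $\kappa=1/(m-1)$ (which checks out, since a $k$-node subtree has weight $(m-1)k+1$ and the new-child mass $m-d$ is split uniformly over external positions by the definition of the random \mary{} labelling) exhibits the position sequence at each node as an exact Blackwell--MacQueen/\Polya{} urn sequence, and de Finetti then converts this, node by node, into \iid{} sampling from independent $\Dir(\kappa,\dots,\kappa)$ vectors --- which is literally the split-tree rule. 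Note that your urn is the same urn as the paper's (the weight $\kappa+b_i$ is the external-node count scaled by $m-1$), but you use its exact exchangeability/directing-measure characterization rather than only its \as{} limit. What your approach buys is a self-contained, exact, non-asymptotic proof that does not route through \refT{T1}, GEM, or the symmetrization step; what the paper's approach buys is brevity given that \refT{T1} is already established, plus it isolates the only genuinely new content (identifying the symmetric split vector). The one point you should make explicit in a write-up is the conditional independence across nodes: the choice made at $v$ for each ball passing through $v$ depends on the history only through the previous choices made at $v$ itself, which is what licenses applying de Finetti independently at every node.
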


\begin{proof}
By \refT{T1}, the sequence of \maryit{s} $(T_n\XM m)_n$ has, as unordered trees,
the same distribution as the random split trees $(T_n^{\cP'})_n$, where
$\cP'=(P'_i)\ao\sim \GEM\bigpar{-\frac{1}{m-1},\frac{m}{m-1}}$.
As noted above, $P'_j=0$ for $j>m$, so we may as well  use the finite
split vector $(P'_i)_1^m$. Let $\cP=(P_i)_1^m$ be a uniformly random
permutation of $(P'_i)_1^m$. Then, as sequences of unordered trees,
$(T_n^{\cP})_n\eqd (T_n^{\cP'})_n \eqd (T_n\XM m)_n$. 
Moreover, regarded as \mary{} trees, both 
$(T_n^{\cP})_n$ and $(T_n\XM m)_n$ are, by symmetry, invariant under random
relabellings of the children of each node. Consequently,
$(T_n^{\cP})_n\eqd  (T_n\XM m)_n$ also as \mary{} trees, as claimed.

It remains to identify the split vector $\cP$. The definition as a random
permutation of $(P_i')_1^m$ does not seem very convenient; instead we use a
variation of the argument in \refApp{AL2} for \refL{L2}. 
We may assume that $T_n=T_n\XM m=T_n\VP$, as \mary{} trees, for all $n\ge1$.
Let 
$N_j(n)$ be the number of nodes and
$N^e_j(n)$ the number of external nodes in the principal subtree $T_n^j$
(now using the given labelling of the children of the root).
It is easy to see that $N^e_j(n)=(m-1)N_j(n)+1$.

Consider first $T_n$ as 
the random split tree $T_n\VP$; then 
the law of large numbers yields, by conditioning on the split vector $\cP$ at
the root,
\begin{equation}\label{nu}
N_j(n)/n\asto P_j,
\qquad j=1,\dots,m.  
\end{equation}
Next, consider $T_n$ as the \maryit{} $T_n\XM m$, and 
regard the external nodes in $T_n^j$ as balls with colour $j$. 
Then the
external nodes evolve as a \Polya{} urn with $m$ colours, starting with one
ball of each colour and at each round adding $m-1$ balls of the same colour
as the drawn one. Then, 
see \eg{} \cite{Athreya1969} or \cite[Section 4.7.1]{JohnsonKotz},
the vector of proportions $\bigpar{N^e_j(n)/((m-1)n+1)}_{j=1}^m$ of the different
colours converges \as{} to a random vector with a symmetric Dirichlet
distribution  
$\Dir(\frac{1}{m-1},\dots,\frac{1}{m-1})$.
Hence the vector $\bigpar{N_j(n)/n}_j$ converges to the same limit.
This combined with \eqref{nu} shows that 
$\cP\sim\Dir(\frac{1}{m-1},\dots,\frac{1}{m-1})$.
\end{proof}

\begin{remark}
  If we modify the proof above by considering one $N_j$ at a time, using a
  sequence of two-colour \Polya{} urns as in \refApp{AL2}, we obtain a
  representation \eqref{gem} of the Dirichlet distributed split vector with
$Z_j\sim B\bigpar{\frac{1}{m-1},\frac{m-j}{m-1}}$, $j=1,\dots,m$; \cf{} the
similar but different \eqref{gemZ}.
(This representation can also be seen directly.)
\end{remark}

\begin{remark}
  \citet{BroutinDevroyeEtAl2008} study a general model of random  trees 
that generalizes split trees (with bounded outdegrees)
by allowing more general mechanisms to
  split the nodes (or balls) than the ones considered in the present paper.
(The main difference is that the splits only asymptotically are given by a
single split vector $\cV$.)
Their examples include the \maryit, and also increasing trees
as defined by \citet{BergeronFS92} with much more general weights, 
assuming only a finite maximum outdegree $m$; they show that 
some properties of such trees  asymptotically
depend only on $m$, and in particular that the
distribution of subtree sizes $\bigpar{N_j(n)/n}_1^d$ converges to
the Dirichlet distribution
$\Dir(\frac{1}{m-1},\dots,\frac{1}{m-1})$ seen also in \refT{T2} above.
(Recall that \refT{T2}, while for a special case only, is an
exact representation for all $n$ and not only an asymptotic result.)
\end{remark}

There is no analogue of \refT{T2} for $\chi\ge0$, since then the
split vector is infinite, and symmetrization is not possible.

\section*{Acknowledgement}
I thank Cecilia Holmgren for helpful discussions.

\appendix

\section{Two alternative proofs}\label{AA}

We give here two alternative arguments, a direct proof of \refL{L2} and 
an alternative version of part of the proof  of \refT{T1} without
using Kingman's theory of exchangeable partitions.
We do this both for completeness and because we find the alternative and
more direct arguments interesting.
(For the proof of \refT{T1}, it should be noted that the two 
arguments, although stated using different concepts,
 are closely related, see the proof of Kingman's paintbox theorem by
\citet[\S11]{Aldous}.) 

\subsection{A direct proof of \refL{L2}}\label{AL2}
%\begin{proof}[A direct proof of \refL{L2}]
%As said above, we give here a direct proof of \refL{L2}, for completeness
%and since the proof might be interesting in itself.
We often write $N_k$ for $N_k(n)$.

Consider first the evolution of the first principal subtree $T^1_n$. 
Let us colour all nodes in $T^1_n$ red and all other nodes white.
If at some stage there are $r=N_1\ge1$ red nodes and $w$ white nodes, and thus
$n=r+w$ nodes in total, then the total weight $R$
of the red nodes is, using
\refL{L1}, 
\begin{equation}\label{R}
R=w(T^1_n)=r-\chi=N_1-\chi, 
\end{equation}
while the total weight of all nodes is
$w(T_n)=n-\chi$, and thus the total weight $W$ of the white nodes is
\begin{equation}\label{W}
  W=w(T_n)-w(T_n^1)=(n-\chi)-(r-\chi)=n-r=w. 
\end{equation}
By \eqref{R}--\eqref{W}, 
adding a new red node increases $R$ by 1, but does not
change $W$, while adding a new white node increases $W$ by 1 but does not
change $R$. Moreover, by definition, the 
probabilities that the next new node is red or white are proportional to $R$
and $W$. In other words, the total red and white
weights $R$ and $W$
evolve as a \Polya{} urn with balls of two colours, where a ball is
draw at random and replaced together with a new ball of the same colour.
(See \eg{} \cite{EggPol,Polya} and, even earlier, \cite{Markov}.)
Note that while the classical description of \Polya{} urns  considers
the numbers of balls of different colours, and thus implicitly assumes that
these are integers, the weights considered here may be arbitrary positive real
numbers; however, it has been noted many times that this extension of the
original definition does not change the results,
see \eg{} \cite[Remark 4.2]{SJ154} %\cite{SJ169} and \cite{P} 
and \cf{} \cite{Jirina} for the related case of branching processes.

In our case, the first node is the root, which is white, and the second node
is its first child, which is the root of the principal subtree $T^1$ and
thus is red. Hence, the \Polya{} urn just described starts (at $n=2)$ with
$r=w=1$, and thus by \eqref{R}--\eqref{W} $R=1-\chi$ and  $W=1$.

It is well-known that for a \Polya{} urn of the type just described (adding one
new ball each time, of the same colour as the drawn one), with initial
(non-random) values $R_0$ and $W_0$ of the weights, the red proportion in
the urn, \ie, $R/(R+W)$, converges \as{} to a random variable $Z\sim
B(R_0,W_0)$. 
(Convergence in distribution follows easily from the simple exact formula
for the distribution of the sequence of the first $N $ draws \cite{Markov}; 
convergence
\as{} follows by the martingale convergence theorem, or by
exchangeability and de Finetti's theorem. 
See also \cite[Sections 4.2 and 6.3.3]{JohnsonKotz}.)
Consequently, in our case, $R/(R+W)\asto Z_1\sim B(1-\chi,1)$, and thus
by \eqref{R}--\eqref{W} $N_1(n)/n\asto Z_1\sim B(1-\chi,1)$.
Note that this is consistent with \eqref{gemZ}, with
$(\ga,\theta)=(\chi,\rho)$, since we assume \eqref{=1}.
Furthermore, by the definition \eqref{gem}, we have
$P_1=Z_1$, and thus $N_1(n)/n\asto P_1$.

We next consider $N_2$, then $N_3$, and so on. 
In general, for the $k$th principal subtree, we suppose by induction that
$N_i(n)/n\asto P_i$ for $1\le i<k$, with $P_i$ given by \eqref{gem} for some
independent random variables $Z_i$ satisfying \eqref{gemZ}, $i<k$.
We now colour all nodes in the principal subtree $T^k_n$ red, all nodes in
$T_n^1,\dots,T_n^{k-1}$ black, and the remaining ones white.
We then ignore all black nodes, and consider only the (random) times that a
new node is added and becomes red or white. Arguing as above, we see that
if there are $r=N_k\ge1$ red and $w$ white nodes, then the red and white
total weights $R$ and $W$ are given by
\begin{align}\label{Rk}
R&=w(T^k_n)=r-\chi=N_k-\chi, 
\\
\label{Wk}
  W&=w(T_n)-\sum_{i=1}^kw(T_n^i)=(n-\chi)-\sum_{i=1}^k(N_i-\chi)=w+(k-1)\chi.
\end{align}
Moreover, $(R,W)$ evolve as a \Polya{} urn as soon as there is a red node.
When the first red node appears, there is only one white node (the root),
since then $T^j$ is empty for $j>k$. Consequently, then $r=w=1$, and
\eqref{Rk}--\eqref{Wk} show that the \Polya{} urn now starts with
%$(R,W)=1-\chi,1+(k-1)\chi=\rho+k\chi$. 
$R=1-\chi$ and $W=1+(k-1)\chi=k\chi+\rho$. 
Since the total number of non-black nodes is $n-\sum_{i<k}N_i$, it follows
that, as \ntoo,
\begin{equation}\label{poli}
  \frac{N_k(n)}{n-\sum_{i<k} N_i(n)}
\asto Z_k,
\end{equation}
for some random variable $Z_k\sim B(1-\chi,k\chi+\rho)$, again consistent
with \eqref{gemZ}.
Moreover, this \Polya{}
urn is independent of what happens inside the black subtrees, and thus $Z_k$
is independent of $Z_1,\dots,Z_{k-1}$.
We have, by \eqref{poli}, the inductive hypothesis and \eqref{gem},
\begin{equation}
  \begin{split}
  \frac{N_k(n)}{n}
&=
  \frac{N_k(n)}{n-\sum_{i<k} N_i(n)}\cdot \frac{n-\sum_{i<k} N_i(n)}{n}
\\&
\asto Z_k\Bigpar{1-\sum_{i<k} P_i}
= Z_k\prod_{i<k}(1-Z_i)=P_k .    
  \end{split}
\end{equation}
This completes the proof.
\qed %\end{proof}

\subsection{An alternative argument in the proof of \refT{T1}}\label{AT}
%\begin{proof}[An alternative argument in the proof of \refT{T1}]
%\hskip 1pt plus 6 pt
The equality \eqref{exch} shows a kind of limited exchangeability 
for the infinite sequence $(X_i)_1^\infty$; 
limited because we only consider acceptable sequences, \ie, the
first appearance of each label is in the natural order.
We eliminate this restriction by a random relabelling of the principal subtrees;
let $(U_i)_1^\infty$ be an \iid{} sequence of $U(0,1)$ random variables,
independent of everything else, and relabel the balls passed to subtree $i$
by $U_i$. Then the sequence of new labels is $(U_{X_i})_1^\infty$, and it
follows from \eqref{exch} and symmetry that this sequence is exchangeable,
\ie, its distribution is invariant under arbitrary permutations.
Hence, by de Finetti's theorem \cite[Theorem 11.10]{Kallenberg},
there exists a random probability measure $\bP$ on $\oi$ such that
the conditional distribution of $(U_{X_i})_1^\infty$ given $\bP$ \as{}
equals the distribution of an \iid{} sequence of random variables with the
distribution $\bP$.

As in the proof in \refS{Spf},
every principal subtree $T^j$ satisfies 
by \refL{L2}
either $|T^j(n)|\to\infty$
as \ntoo, or $T^j(n)=\emptyset$  for all $n$.
Hence, \as{} there exists some (random) index $\ell$ such that $X_\ell=X_1$,
and thus $U_{X_\ell}=U_{X_1}$. It follows that the random measure $\bP$
\as{} has no continuous part, so $\bP=\sum_{i=1}^\infty \aP_i\gd_{\xi_i}$, for some
random variables $\aP_i\ge0$ and (distinct) 
random points $\xi_i\in\oi$, with $\sum_i \aP_i=1$. (We allow $\aP_i=0$, and can
thus write $\bP$ as an infinite sum even if its support happens to be
finite.) 
%We may suppose that the atoms of $\bP$ are numbered in order of
%decreasing size, so $\aP_1\ge\aP_2\ge\dots$.

The labels $\xi_i$ serve only to distinguish the subtrees, and we may now
relabel again, replacing $\xi_i$ by $i$. After this relabelling, the
sequence $(X_i)$ has become a sequence which conditioned on
$\caP:=(\aP_i)_1^\infty$ is an \iid{} sequence with each variable having the
distribution $\caP$. In other words, up to a (random) permutation of the
children, the rules
\ref{pax1}--\ref{pax2} yield the same result as the split tree
rules \ref{split1}--\ref{split2} given in the introduction, using the split
vector $\caP=(\aP_i)_1^\infty$.

It remains to identify this split vector, which is done as in \refS{Spf}, using
\eqref{LLN} and \refL{L2}.
\qed %\end{proof}

\newcommand\AAP{\emph{Adv. Appl. Probab.} }
\newcommand\JAP{\emph{J. Appl. Probab.} }
\newcommand\JAMS{\emph{J. \AMS} }
\newcommand\MAMS{\emph{Memoirs \AMS} }
\newcommand\PAMS{\emph{Proc. \AMS} }
\newcommand\TAMS{\emph{Trans. \AMS} }
\newcommand\AnnMS{\emph{Ann. Math. Statist.} }
\newcommand\AnnPr{\emph{Ann. Probab.} }
\newcommand\CPC{\emph{Combin. Probab. Comput.} }
\newcommand\JMAA{\emph{J. Math. Anal. Appl.} }
\newcommand\RSA{\emph{Random Structures Algorithms} }
\newcommand\ZW{\emph{Z. Wahrsch. Verw. Gebiete} }
\newcommand\DMTCS{\jour{Discr. Math. Theor. Comput. Sci.} }

\newcommand\AMS{Amer. Math. Soc.}
\newcommand\Springer{Springer-Verlag}
\newcommand\Wiley{Wiley}

\newcommand\vol{\textbf}
\newcommand\jour{\emph}
\newcommand\book{\emph}
\newcommand\inbook{\emph}
\def\no#1#2,{\unskip#2, no. #1,} %(typeset after year) 
\newcommand\toappear{\unskip, to appear}

\newcommand\arxiv[1]{\texttt{arXiv:#1}}
\newcommand\arXiv{\arxiv}

\def\nobibitem#1\par{}

\end{document}